\newtheorem{corollary}{\textbf{Corollary}}
\newtheorem{prop}{\textbf{Proposition}}
\newtheorem{common}{\textbf{Theorem}}
\newcommand*{\esssup}{\mathop{\mathrm{ess\hspace{0.1cm}sup}}\limits}
\newcommand*{\divv}{\mathop{\mathrm{div}}\limits}
\newcommand*{\curl}{\mathop{\mathrm{curl}}\limits}
\newcommand{\te}{\theta}
\newcommand{\ut}{u_{\te}}
\newcommand{\ur}{u_{r}}
\newcommand{\uz}{u_{z}}
\newcommand{\utr}{u_{\te,r}}
\newcommand{\urr}{u_{r,r}}
\newcommand{\uzr}{u_{z,r}}
\newcommand{\utz}{u_{\te,z}}
\newcommand{\urz}{u_{r,z}}
\newcommand{\uzz}{u_{z,z}}
\newcommand{\ot}{\omega_{\te}}
\newcommand{\oor}{\omega_{r}}
\newcommand{\oz}{\omega_{z}}
\newcommand{\otr}{\omega_{\te,r}}
\newcommand{\orr}{\omega_{r,r}}
\newcommand{\ozr}{\omega_{z,r}}
\newcommand{\otz}{\omega_{\te,z}}
\newcommand{\orz}{\omega_{r,z}}
\newcommand{\ozz}{\omega_{z,z}}
\newtheorem{remark}{{\textbf {Remark}}}
\newcommand{\hd}{\hspace{0.2cm}}
\newcommand{\vd}{\vspace{0.2cm}}
\newcommand{\ep}{\varepsilon}
\newcommand{\no}{\noindent}
\newcommand{\poch}[2]{{#1}_{,#2}}
\newcommand{\eqq}[2]{\begin{equation}  #1  \label{#2} \end{equation}    }
\newcommand{\rmi}{r^{\mu}}
\newcommand{\utm}{\frac{\ut}{\rmi}}
\newcommand{\utmb}{\Big| \utm\Big|}
\newcommand{\umb}[1]{{\utmb}^{#1}}
\newcommand{\n}[1]{\left\| #1 \right\| }
\newcommand{\be}[1]{\left| #1 \right| }
\newcommand{\ra}{r^{\alpha}}
\newcommand{\ota}{\frac{\ot}{\ra}}
\newcommand{\otab}{\Big| \ota\Big|}
\newcommand{\oab}[1]{{\otab}^{#1}}
\newcommand{\jrk}{\frac{1}{r^{2}}}
\newcommand{\urpr}{\frac{ \ur}{r}}
\newcommand{\umpr}{\frac{\ur^{-}}{r}}
\newcommand{\urp}{\ur^{+}}
\newcommand{\dz}{\delta_{0}}
\newcommand{\bu}{\mathbf{u}}
\newcommand{\m}[1]{\mbox{#1}}
\newcommand{\rr}{\mathbb{R}}
\title{Remarks on Regularity Criteria for Axially Symmetric
 Weak Solutions to the Navier-Stokes Equations, II}
\author{Adam Kubica}
\begin{document}

\maketitle

\begin{center}
Faculty  of Mathematics and Information Science \\
Warsaw University of Technology \\
ul. Koszykowa 75, Warsaw 00-662 \\
a.kubica@mini.pw.edu.pl \\
\end{center}

\begin{abstract}
 We examine the conditional regularity of the solutions of Navier-Stokes equations
 in  the entire three-dimensional space under the assumption that the
 data are axially symmetric. We show that if positive part of the radial  component of velocity satisfies a weighted Serrin condition and in addition angular component
 satisfies some condition, then  the solution is regular.

\end{abstract}

\section{Introduction}

Let us consider the Navier--Stokes equations in entire
three-dimensional space \eqq{
\begin{array}{rlc}
\displaystyle \frac{\partial \bu}{ \partial t} + \bu \cdot \nabla \bu - \nu \Delta
\bu + \nabla p =\mathbf{0} & \m{ in }   & (0,T)\times \rr^{3}, \\
\divv{\bu} = 0 & \m{ in } & (0,T)\times \rr^{3}, \\
\bu(0,\mathbf{x} )= \bu_{0}(\mathbf{x}) & \m{ in } & \rr^{3} ,\\
\end{array}
}{aaa}

\no where $\bu:(0,T) \times \rr^{3} \rightarrow \rr^{3}$ is the
velocity field, $p:(0,T) \times \rr^{3} \rightarrow \rr $ is the
pressure, $0<T \leq \infty$, $ \nu$ is the viscosity coefficient,
$\bu_{0}$ is the initial velocity and the forcing term is, for the
sake of simplicity, considered to be zero. Our main result is following

\begin{common}
Let $\bu$ be a weak solution to problem (\ref{aaa}) satisfying the
energy inequality with $\bu_{0} \in W^{2,2}(\rr^{3})$ and  $r{\ut}(0) \in
L^{\infty}(\rr^{3})$. Let $\bu_{0}$ be axisymmetric. If, in
addition, $\urp$ a positive part of radial component of velocity satisfies $r^{d}\urp \in L^{w,s}((0,T) \times (\rr^{3}\cap \{r<\delta_{1} \})) $ for some $s \in (\frac{3}{2}, \infty)$, $w\in (1, \infty)$ and $d \in (-1,1)$ such that \mbox{$\frac{2}{w}+ \frac{3}{s}+ d =1$} for some positive $\delta_{1}$ and  $ r^{1-\dz} \ut \in L^{\infty}((0,T) \times \rr^{3})$ for some positive  $\dz$,
 then $(\bu,p)$, where $p$ is the corresponding
pressure,  is an axisymmetric strong solution to problem (\ref{aaa})
which is unique in the class of all weak solutions satisfying the
energy inequality. \label{thmradial}
\end{common}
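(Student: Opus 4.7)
The strategy is to obtain an a priori bound on a weighted swirl vorticity $\ota := \ot/\ra$ of the form $\ota \in L^{\infty}(0,T; L^{q}(\rr^{3}))$ together with a weighted gradient in $L^{2}_{t,x}$, for suitably chosen $\alpha \in (0, 1-2\dz)$ and $q>2$. Such a bound upgrades the weak solution to an axisymmetric strong one via the Biot--Savart and bootstrap arguments already assembled in Part~I of this series, and uniqueness among weak energy solutions follows automatically.

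The starting point is the equation for $\ota$, obtained from the vorticity equation for $\ot$ after division by $\ra$:
\[
\partial_{t}\ota + (\ur\partial_{r}+\uz\partial_{z})\ota + (\alpha-1)\tfrac{\ur}{r}\ota - \nu\Delta\ota - \nu\tfrac{2\alpha}{r}\partial_{r}\ota + \nu(1-\alpha^{2})\tfrac{\ota}{r^{2}} = -\tfrac{1}{r^{1+\alpha}}\partial_{z}(\ut^{2}).
\]
Testing against $|\ota|^{q-2}\ota$ and using $\divv\bu=0$ to annihilate the transport term, the dissipation yields the coercive quantities $\nu(q-1)\int|\ota|^{q-2}|\nabla\ota|^{2}\,dx$ and $\nu(1-\alpha^{2})\int|\ota|^{q}/r^{2}\,dx$. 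Decomposing $\ur = \urp - \ur^{-}$, the $\ur^{-}$ piece of $(\alpha-1)\int\frac{\ur}{r}|\ota|^{q}$ carries the correct sign and joins the left-hand side, leaving
\[
(1-\alpha)\int\tfrac{\urp}{r}|\ota|^{q}\,dx \quad\text{and}\quad (q-1)\int\tfrac{\ut^{2}}{r^{1+\alpha}}|\ota|^{q-2}\partial_{z}\ota\,dx
\]
(the latter obtained from the right-hand side after integration by parts in $z$) to be absorbed.

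The swirl forcing is handled by Cauchy--Schwarz: a small fraction of $\int|\ota|^{q-2}|\partial_{z}\ota|^{2}$ is absorbed into the dissipation, leaving $\int\ut^{4}r^{-2-2\alpha}|\ota|^{q-2}\,dx$. Using the pointwise bound $|\ut|\leq C r^{\dz-1}$ from the second hypothesis, this is dominated by $C\int r^{4\dz-6-2\alpha}|\ota|^{q-2}\, dx$, which an appropriate Hardy/Sobolev interpolation reduces to a combination of the already controlled $\int|\ota|^{q}/r^{2}\, dx$ and $\int|\ota|^{q-2}|\nabla\ota|^{2}\, dx$ with a small prefactor, provided $\alpha < 1-2\dz$. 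For the critical term in $\urp$, we localise to $\{r<\delta_{1}\}$ (the complementary region is handled by the standard Serrin estimate using the energy inequality) and write $\urp/r = (r^{d}\urp)\cdot r^{-1-d}$. H\"older's inequality in space-time with exponents $(w,s)$ on $r^{d}\urp$, combined with Sobolev embedding applied to $|\ota|^{q/2}$ (whose $H^{1}$-norm is controlled by the dissipation), balances exactly because of the scaling identity $\frac{2}{w}+\frac{3}{s}+d = 1$; Young's inequality then absorbs the gradient factor, reducing the estimate to an integral in time to which Gronwall's lemma applies.

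The principal technical difficulty is the reconciliation of parameters: one needs $q$ large enough for the Sobolev interpolation on $|\ota|^{q/2}$ to fit the Serrin scaling (using $d\in(-1,1)$ and $\frac{2}{w}+\frac{3}{s}+d=1$), while $\alpha$ must be simultaneously positive (for the Hardy coercivity) and strictly below $1-2\dz$ (for the swirl forcing). The assumptions $\dz>0$ and $d\in(-1,1)$ provide just enough room to make a consistent choice, after which Gronwall closes the estimate and strong regularity is inherited from Part~I.
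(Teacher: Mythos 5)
Your treatment of the $\ur^{+}$ term is sound and matches the paper's in spirit (localise to $\{r<\delta_{1}\}$, split $\urp/r=(r^{d}\urp)\,r^{-1-d}$, H\"older with $(w,s)$, Sobolev on $\otab^{q/2}$, Young, Gronwall; the scaling $\frac{2}{w}+\frac{3}{s}+d=1$ makes it balance). The genuine gap is in the swirl forcing term $\frac{1}{r}\partial_{z}(\ut^{2})$. After your integration by parts in $z$ and Cauchy--Schwarz against the dissipation, you are left with $\int \ut^{4}\,r^{-2-2\alpha}\,\otab^{q-2}\,dx$, and you propose to control it from $|\ut|\leq Cr^{\dz-1}$ alone, i.e.\ from $\int r^{4\dz-6-2\alpha}\otab^{q-2}\,dx$, by ``Hardy/Sobolev interpolation.'' This does not close: the only coercive quantities available are $\int\otab^{q}r^{-2}$ and $\int\big|\nabla\otab^{q/2}\big|^{2}$, and any Young/H\"older splitting of $r^{4\dz-6-2\alpha}\otab^{q-2}$ against $\otab^{q}r^{-2}$ leaves a residual pure power $\int r^{\beta}\,dx$ over $\rr^{3}$ (or a weight deficit of roughly $r^{-4+4\dz}$ near the axis even after one application of Hardy), which is never finite. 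The condition $\alpha<1-2\dz$ does not repair this; the deficit is of order $r^{-4}$, not $r^{-2}$.

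This is precisely the point where the paper abandons the single-quantity scheme: it runs a \emph{coupled} estimate for $\utm$ in $L^{p}$ and $\ota$ in $L^{q}$ simultaneously (identity (\ref{main})), does \emph{not} integrate the forcing by parts, and instead pairs the stretching term $I_{3}=\int\frac{\ut}{r}\utz\,\oab{q-2}\frac{\ot}{r^{2\alpha}}$ with the dissipation $\int\utmb^{p-2}\big|\utz/r^{\mu}\big|^{2}$ coming from the angular-velocity energy identity (\ref{d}), using the maximum-principle bound $\|r\ut\|_{L^{\infty}}<\infty$ (this is why the hypothesis $r\ut(0)\in L^{\infty}$ is there --- your proposal never uses it, which is a symptom of the missing ingredient) to extract the factor $|r\ut|^{\gamma}$ in Proposition~\ref{rem3}. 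The coupling in turn generates the term $I_{1}=\int\frac{\ur^{-}}{r}\utmb^{p}$, handled via the weighted div--curl bound $\|\ur/r^{1+\alpha}\|_{q}\lesssim\|\ota\|_{q}$ with an $\mathcal{A}_{q}$ weight (Remark~\ref{rem1}, Proposition~\ref{rem2}) together with $\|r^{1-\dz}\ut\|_{L^{\infty}}$; none of this machinery appears in your outline. Your parameter regime ($\alpha>0$, $q>2$) versus the paper's ($\alpha<0$ small, $p,q<2$) is not itself an error, but as written the swirl forcing estimate is the step that fails, and it is the central difficulty of the problem.
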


\no It is convenient to write the equations (\ref{aaa})  in
cylindrical coordinates

\eqq{ \poch{\ur}{t} +\ur \urr + \uz \urz - \frac{1}{r} \ut^{2}+
\poch{ p }{r}- \nu \Big[\frac{1}{r} \poch{\big(r \urr\big)}{r}  +
\poch{\ur}{zz}- \frac{\ur}{r^{2}}\Big] =0,}{na}

\eqq{ \poch{\ut}{t} +\ur \utr + \uz \utz + \frac{1}{r} \ut\ur -\nu
\Big[\frac{1}{r} \poch{\big(r \utr\big)}{r}  + \poch{\ut}{zz}-
\frac{\ut}{r^{2}}\Big] =0,}{nb}

\eqq{ \poch{\uz}{t} +\ur \uzr + \uz \uzz + \poch{ p}{z} - \nu
\Big[\frac{1}{r} \poch{\big(r \uzr\big)}{r}  + \poch{\uz}{zz}
\Big]=0 .}{nc}

\no The equation of continuity has the following form in cylindrical
coordinates

\eqq{\urr + \frac{\ur}{r}+ \uzz=0.}{nd}

\no We put

\eqq{\mbox{\boldmath$\omega$} = \curl{\bu}.}{ne}

\vd \no We have

\[
 \oor = -\utz, \hd \ot = \urz -\uzr, \hd \oz = \utz+\frac{\ut}{r},
\]

\vd \vd

\no hence we get

\eqq{ \poch{\oor}{t} +\ur \orr + \uz \orz - \urr \oor-\urz \oz - \nu
\Big[\frac{1}{r} \poch{\big(r \orr\big)}{r}  + \poch{\oor}{zz}-
\frac{\oor}{r^{2}}\Big] =0,}{nf}

\eqq{ \poch{\ot}{t} +\ur \otr + \uz \otz - \frac{\ur}{r} \ot+2
\frac{\ut}{r} \oor - \nu \Big[\frac{1}{r} \poch{\big(r \otr
\big)}{r}  + \poch{\ot}{zz}- \frac{\ot}{r^{2}}\Big] =0,}{ng}

\eqq{ \poch{\oz}{t} +\ur \ozr + \uz \ozz -\uzr \oor- \uzz \oz- \nu
\Big[\frac{1}{r} \poch{\big(r \ozr\big)}{r}  + \poch{\oz}{zz}
\Big]=0 .}{nh}

\no Suppose that $0<t^{\ast}<T$ is the time of the first blow up of
the solutions, i.e. the smaller positive number such that
$\sup\limits_{t\in (0, t^{\ast})} \| \nabla \bu (t, \cdot )
\|_{L^{2}(\rr^{3})}=\infty$. Then, for $0<\bar{t}<t^{\ast}$ the
equations (\ref{na})-(\ref{nc}) and (\ref{nf})-(\ref{nh}) are
satisfied in $((0,\bar{t})\times \rr^{3})$ in strong sense. We will
show that it is impossible, if $\urp$ and $\ut$ satisfy our assumptions.

\vd \no First we multiply (\ref{nb}) by $\umb{p-2}\frac{\ut}{r^{2\mu}}$,
then after integrating by parts we get

\vd

\eqq{\frac{1}{p} \frac{d}{dt}
\n{\utm}^{p}_{p}+\frac{4(p-1)\nu}{p^{2}} \int \be{\nabla \utmb^{
 \frac{p}{2} } }^{2} +\nu(1-\mu^{2}) \int \utmb^{p} \frac{1}{r^{2}}+(1+\mu)\int \frac{\ur^{+}}{r}\utmb^{p}=
  (1+\mu)\int \frac{\ur^{-}}{r}\utmb^{p}.}{d}

\vd \no Next, we multiply (\ref{ng}) by
$\oab{q-2}\frac{\ot}{r^{2\alpha}}$, then after integrating by parts
we get

\[
\frac{1}{q} \frac{d}{dt} \n{\ota}^{q}_{q}+\frac{4\nu(q-1)}{q^{2}}
\int \be{\nabla \oab{\frac{q}{2} } }^{2} +\nu(1-\alpha^{2}) \int
\oab{q}\frac{1}{r^{2}} +(1-\alpha) \int \frac{\ur^{-}}{r} \oab{q}
\]

\eqq{= (1-\alpha) \int \frac{\ur^{+}}{r} \oab{q} +2\int
\frac{\ut}{r} \utz \oab{q-2} \frac{\ot}{r^{2\alpha}}.  }{i}

\vd \no Thus we have

\[
\frac{1}{p} \frac{d}{dt} \n{\utm}^{p}_{p}+\frac{1}{q} \frac{d}{dt}
\n{\ota}^{q}_{q}
+ \frac{4(p-1)\nu}{p^{2}} \int \be{\nabla \utmb^{
 \frac{p}{2} } }^{2}+\frac{4\nu(q-1)}{q^{2}}
\int \be{\nabla \oab{\frac{q}{2} } }^{2}
\]

\[
+\nu(1-\mu^{2}) \int \utmb^{p} \frac{1}{r^{2}}+\nu(1-\alpha^{2}) \int \oab{q}\frac{1}{r^{2}}
+(1+\mu)\int \frac{\ur^{+}}{r}\utmb^{p}+(1-\alpha) \int
\frac{\ur^{-}}{r} \oab{q}
\]

\eqq{
=(1+\mu)\int \frac{\ur^{-}}{r}\utmb^{p}+(1-\alpha) \int \frac{\ur^{+}}{r} \oab{q} +2\int
\frac{\ut}{r} \utz \oab{q-2} \frac{\ot}{r^{2\alpha}}\equiv (1+\mu)I_{1}+(1-\alpha)I_{2}+2I_{3}.
}{main}

\section{Estimate of $I_{3}$}

\begin{prop}
For $\gamma \in (0,3)$, \hd $q\in (\frac{2}{4-\gamma},2) $, \hd $p =\frac{(4-\gamma)q}{2}$, \hd $\mu \in (-1,1)$ and \hd $a\in (0,1)$ we have

\eqq{|I_{3}| \leq \ep_{1} \int \utmb^{p-2} \Big| \frac{\utz}{ r^{\mu}} \Big|^{2} + \ep_{2} \int \utmb^{p} \frac{1}{r^{2}} +
\ep_{3}  \int \otab^{q} \frac{1}{r^{2}} + C \int  \otab^{q},  }{aa}

\no where

\eqq{\alpha = 2\mu - \frac{\gamma}{2} (1+ \mu )- \frac{2(q-1)}{q}(1-a), }{ab}

\no and $C=C(\gamma, q, a, \ep_{1},\ep_{2}, \ep_{3})$.

\label{rem3}
\end{prop}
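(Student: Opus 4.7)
The plan is to estimate the trilinear integral $I_3$ by a multi-factor Young / Hölder inequality, decomposing its integrand into precisely four pieces that each match one of the four terms on the right-hand side. Writing $\ut = \utmb \cdot r^\mu \cdot \mathrm{sgn}(\ut)$ and $\ot = \otab \cdot r^\alpha \cdot \mathrm{sgn}(\ot)$ and taking absolute values,
\[
|I_3| \le \int \utmb \cdot \Big|\tfrac{\utz}{r^\mu}\Big| \cdot \oab{q-1} \cdot r^{2\mu-\alpha-1}\, dx,
\]
so the task reduces to factoring this integrand into appropriate powers of
\[
\utmb^{p-2}\Big|\tfrac{\utz}{r^\mu}\Big|^2,\qquad \tfrac{\utmb^p}{r^2},\qquad \tfrac{\oab{q}}{r^2},\qquad \oab{q}.
\]

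Matching the powers of $\utz/r^\mu$, of $\utmb$, of $\otab$, and of $r$ yields a linear system in four Young exponents $\theta_1,\ldots,\theta_4$. The $|\utz/r^\mu|$ equation forces $\theta_1 = \tfrac12$, and combined with $p = (4-\gamma)q/2$ the $\utmb$ equation fixes $\theta_2$. The $\otab$ and $r$ equations then leave a one-parameter family of solutions, and I would parametrise this freedom by $a \in (0,1)$, splitting $(q-1)/q$ between $\theta_3$ and $\theta_4$. The closure formula
\[
\alpha = 2\mu - \tfrac{\gamma}{2}(1+\mu) - \tfrac{2(q-1)}{q}(1-a)
\]
is precisely the consistency condition arising from this exponent matching.

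The main technical obstacle is that the matched exponents satisfy $\sum_i \theta_i = 1 + \gamma/[(4-\gamma)q] > 1$, so a direct 4-factor Young step is not available when $\gamma>0$. To absorb the excess one would combine Young with a Hardy-type interpolation of the form
\[
\int \oab{q}\, r^{\beta}\, dx \;\le\; \ep \int \tfrac{\oab{q}}{r^2}\, dx + C(\ep)\int \oab{q}\, dx \qquad (\beta \in (-2,0)),
\]
which transfers the surplus $r$-weight into the last, unweighted $\int \oab{q}$ term. The parameter $a$ is the Hardy interpolation exponent: $a$ near $1$ places us near the $r^{-2}$-weighted endpoint, $a$ near $0$ near the unweighted one. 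This, coupled with the Young step, accounts for why $a$ appears in the formula for $\alpha$ but not in the list of four RHS terms.

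Once this decomposition is in place, Young's inequality on the four factors together with the Hardy interpolation produces the claimed bound. What remains is the routine but essential positivity check: under $\gamma \in (0,3)$, $q \in (2/(4-\gamma),2)$, $\mu \in (-1,1)$, $a \in (0,1)$ one has to verify that all the $\theta_i$ are positive and that $\beta$ lies in $(-2,0)$, from which the quantitative dependence $C = C(\gamma, q, a, \ep_1, \ep_2, \ep_3)$ follows. The delicate point, and the step I would spend the most care on, is bookkeeping these positivity conditions simultaneously: the interplay between the lower bound $q > 2/(4-\gamma)$ and the requirement $\mu \in (-1,1)$ is exactly what guarantees that no $\theta_i$ degenerates and that the Hardy interpolation exponent stays admissible.
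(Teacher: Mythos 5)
Your setup and your diagnosis of the obstruction are both correct: after normalising the integrand, matching powers forces $\theta_{1}=\frac12$, $\theta_{2}=\frac{4-p}{2p}$, $\theta_{3}+\theta_{4}=\frac{q-1}{q}$, and the total is $1+\frac{\gamma}{(4-\gamma)q}>1$, so a four-factor Young inequality cannot close for $\gamma>0$. The gap is in the proposed repair. The inequality $\int \oab{q}\, r^{\beta}\leq \ep\int\oab{q}\,r^{-2}+C(\ep)\int\oab{q}$ is itself just a two-factor Young step whose exponents sum to one; it redistributes powers of $r$ between two terms carrying the \emph{same} power of $\ot$, and therefore cannot change the function-power bookkeeping at all. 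The excess $\frac{\gamma}{(4-\gamma)q}$ sits entirely in $\theta_{1}+\theta_{2}=\frac{2}{p}$, i.e.\ in the $\ut$-factors, not in the $r$-weight, so no shuffling of weight between the third and fourth terms can remove it.

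What the paper actually does is extract the factor $|r\ut|^{\gamma}$ and give it Young exponent $\infty$, i.e.\ place it in $L^{\infty}$. Concretely: a first Young step with exponents $(2,2)$ peels off the $\ep_{1}$-term and leaves an integrand containing $|\ut|^{4-p}$ with $4-p>\gamma$; one factors this as $|r\ut|^{\gamma}\cdot|\ut|^{4-p-\gamma}$ (adjusting the $r$-weight) and applies Young with exponents $(\infty,\frac{q}{2-q},\frac{q}{2(q-1)a},\frac{q}{2(q-1)(1-a)})$, whose reciprocals now sum to one precisely because $(4-p-\gamma)\frac{q}{2-q}=p$. This is where the hypothesis $r\ut(0)\in L^{\infty}$ of Theorem~\ref{thmradial} enters (via the maximum principle for $r\ut$), and the constant in fact depends on $\n{r\ut}_{L^{\infty}}$ --- a dependence suppressed in the statement of the proposition, which may be why you did not look for this ingredient. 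Note also that the cross term $-\frac{\gamma}{2}(1+\mu)$ in the formula for $\alpha$ is the footprint of this extraction, since $|r\ut|^{\gamma}=\utmb^{\gamma}r^{\gamma(1+\mu)}$; the consistency condition of your exponent system without the $L^{\infty}$ factor would yield a different $\alpha$, with no $\gamma\mu$ term. Without this ingredient the argument does not close.
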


\begin{proof}
We have

\[
I_{3}=\int
\frac{\ut}{r} \utz \oab{q-2} \frac{\ot}{r^{2\alpha}} \leq \int \utmb^{\frac{p}{2}-1} \be{\frac{\utz}{r^{\mu}}} \cdot \frac{|\ut|^{2-\frac{p}{2}}}{r^{1+\alpha - \frac{\mu p}{2}}}
\otab^{q-1}
\]
\[
\overset{Y(2,2)}{\leq } \ep_{1} \int \utmb^{p-2} \be{ \frac{\utz}{r^{\mu}}}^{2} + C(1/\ep_{1}) \int \frac{|\ut|^{4-p}}{r^{2[1+\alpha - \frac{\mu p }{2}]}} \otab^{2(q-1)}
\]
\[
= \ep_{1} \int \utmb^{p-2} \be{ \frac{\utz}{r^{\mu}}}^{2} + C(1/\ep_{1}) \int  |r \ut |^{\gamma} \cdot
\frac{|\ut|^{4-p- \gamma}}{r^{2+2\alpha - \mu p + \gamma -\frac{4(q-1)}{q}a }}  \cdot \frac{|\ot|^{2(q-1)a} }{r^{2\alpha (q-1)a + \frac{4(q-1)}{q}a }}\cdot \frac{|\ot|^{2(q-1)(1-a)} }{r^{2\alpha (q-1)(1-a) }}.
\]

\no  Applying Young inequality with exponents $(\infty, \frac{q}{2-q}, \frac{q}{2(q-1)a}, \frac{q}{2(q-1)(1-a)})$ we get

\[
|I_{3}| \leq  \ep_{1} \int \utmb^{p-2} \be{ \frac{\utz}{r^{\mu}}}^{2} +  \ep_{2} \int \frac{|\ut|^{[4-p-\gamma]\frac{q}{2-q}} }{r^{\frac{q}{2-q}[ 2+2\alpha - \mu p + \gamma -\frac{4(q-1)}{q}a ]}}
+ \ep_{3}\int \otab^{q} \frac{1}{r^{2}} + C \int \otab^{q},
\]

\no where $C$ depends on $\ep_{1}$, $\ep_{2}$, $\ep_{3}$, $a$  and $\|r\ut  \|_{L^{\infty}}$.  But we have $[4-p-\gamma]\frac{q}{2-q}=p$ and \linebreak $\frac{q}{2-q}[ 2+2\alpha - \mu p + \gamma -\frac{4(q-1)}{q}a ]= p\mu +2$.

\end{proof}

\section{Estimate of $I_{1}$}

\begin{remark}
For all \hd $q\in (1, \infty)$, \hd $\alpha $ and $\ep_{0}$  satisfy $-2+ \ep_{0} <\alpha <\ep_{0}$. Then  there exists a constant $C=C(q,\alpha , \ep_{0})$ such that

\eqq{\int \be{\frac{\ur}{r^{1+ \alpha}}}^{q}  \cdot \frac{1}{r^{2-\ep_{0} q}} \leq C \int \otab^{q} \cdot \frac{1}{r^{2-\ep_{0} q}}.
 }{ac}

\label{rem1}
\end{remark}

\begin{proof}
We have $\| \urpr \|_{q} \leq c(q) \| \ot \|_{q}$ \hd for all $q\in (1, \infty)$. So we have to verify that $r^{-q(\alpha+ \frac{2}{q}- \ep_{0} )} $ is $A_{q}$ weight. This holds if
\[
-2< -q(\alpha+ \frac{2}{q}- \ep_{0} ) <2(q-1),
\]
\no  i.e. $-2+ \ep_{0} <\alpha <\ep_{0}$.

\end{proof}

\begin{prop}
Assume that $\varpi\equiv \| r^{1-\dz} \ut \|_{L^{\infty}}\leq C$ for some $\dz \in (0, \frac{1}{3})$. Then for all \hd  $\gamma \in (0,3)$, $q\in (\frac{2}{4-\gamma}, 2)$,
$a\in (1-\frac{(4-\gamma)q^{2}}{4(q-1)\dz},1)\cap (0,1)$,
\eqq{
\mu \in (q \dz-1,q\dz+ \frac{\gamma}{4-\gamma})\cap (-1,1),}{ae}
\no  and for $\ep_{4}$, $\ep_{5} \in (0,1)$ the following estimate holds

\eqq{|I_{1}|= \int \umpr \utmb^{p}  \leq  \ep_{4} \int \utmb^{p} \jrk +  \ep_{5} \int \otab^{q} \jrk +C \int \otab^{q},  }{ad}

\no where $p =\frac{(4-\gamma)q}{2} $, \hd $\alpha = 2\mu - \frac{\gamma}{2} (1+ \mu )- \frac{2(q-1)}{q}(1-a), $ \hd and $C=C(\ep_{4}, \ep_{5},a, q, \dz, \gamma, \varpi )$.

\label{rem2}
\end{prop}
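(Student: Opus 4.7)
The plan is to adapt the four-factor Young argument used in the proof of Proposition~\ref{rem3}, replacing the bounded quantity $|r\ut|^{\gamma}$ there by $[r^{1-\dz}|\ut|]^{p/q}$ here, and using Remark~\ref{rem1} to convert the two $|\ur|$-type integrals that emerge into the two $|\ota|$-terms on the right-hand side of (\ref{ad}).

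Concretely, I would factorise the integrand pointwise as
$$\umpr\,\utmb^{p}=[r^{1-\dz}|\ut|]^{p/q}\cdot A\cdot B\cdot C,$$
with $A$, $B$, $C$ chosen so that, after being raised to the Young exponents $q'=q/(q-1)$, $q/a$, $q/(1-a)$ (their reciprocals together with $1/\infty$ summing to $1$), they produce respectively the targets $\utmb^{p}\jrk$, $(|\ur|/r^{1+\alpha})^{q}\jrk$, and $(|\ur|/r^{1+\alpha})^{q}$. Balancing the exponents of $|\ut|$ in the pointwise identity forces the exponent of the bounded factor to be $p/q$, balancing those of $|\ur|$ is automatic with the chosen Young exponents, and balancing the $r$-powers yields $(1+\alpha)q=(1-\dz+\mu)p-q+2(1-a)$, which upon substituting $p=(4-\gamma)q/2$ is equivalent to the formula~(\ref{ab}). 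Young's inequality, together with the pointwise bound $[r^{1-\dz}|\ut|]^{p/q}\le\varpi^{p/q}$, then gives
$$|I_{1}|\le\ep_{4}\int\utmb^{p}\jrk+\tilde{\ep}\int\Big(\tfrac{|\ur|}{r^{1+\alpha}}\Big)^{q}\jrk+\tilde{C}\int\Big(\tfrac{|\ur|}{r^{1+\alpha}}\Big)^{q},$$
and applying Remark~\ref{rem1} with $\ep_{0}=0$ to the second integral and with $\ep_{0}=2/q$ to the third converts them into $\ep_{5}\int\otab^{q}\jrk$ and $C\int\otab^{q}$, completing the proof of~(\ref{ad}).

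The main delicate point is the parameter bookkeeping. The two applications of Remark~\ref{rem1} require $\alpha\in(-2,0)\cap(-2+2/q,2/q)=(-2+2/q,0)$, and substituting formula~(\ref{ab}) into this condition together with $a\in(0,1)$ must reproduce the stated ranges for $\mu$ and $a$. The $q\dz$ shift that appears in the interval for $\mu$---as opposed to the naive $\dz$ shift one would obtain by bounding a single $|\ut|$ pointwise---emerges because the bounded factor carries the extra weight $r^{(1-\dz)p/q}=r^{(4-\gamma)(1-\dz)/2}$, which via the identity $p/q=(4-\gamma)/2$ contributes $(1-\dz)(4-\gamma)$ after division by $q/2$ in the reduction of~(\ref{ab}); this promotes the $\dz$-term to $q\dz$ inside the bounds on $\mu$. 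Verifying this algebra and the induced lower bound for $a$, and choosing $A,B,C$ so that the pointwise factorisation works with the precise $\alpha$ given by (\ref{ab}) rather than only some multiple of it, is the step I expect to be the main obstacle.
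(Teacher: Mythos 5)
Your overall strategy (peel off the bounded factor $|r^{1-\dz}\ut|^{p/q}$, apply Young, and use Remark~\ref{rem1} to trade $\ur$-integrals for $\ot$-integrals) is the same as the paper's, but your bookkeeping does not close, and the discrepancy is not merely technical. Solving your balancing relation $(1+\alpha)q=(1-\dz+\mu)p-q+2(1-a)$ for $\alpha$ and substituting $p/q=(4-\gamma)/2$ gives
\[
\alpha=2\mu-\tfrac{\gamma}{2}(1+\mu)-\tfrac{(4-\gamma)\dz}{2}+\tfrac{2(1-a)}{q},
\]
which differs from (\ref{ab}) by $2(1-a)-\tfrac{(4-\gamma)\dz}{2}$; the two coincide only for the single value $a=1-\tfrac{(4-\gamma)\dz}{4}$, not on the stated range of $a$. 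So the pointwise factorisation you describe simply does not exist with the $\alpha$ of the proposition, and your claim that the relation "is equivalent to formula (\ref{ab})" is an algebra error. A second, independent obstruction: applying Remark~\ref{rem1} with $\ep_{0}=0$ forces $\alpha<0$, whereas the hypotheses (\ref{ae}) permit $\alpha$ up to $\ep_{0}=\tfrac{(4-\gamma)q\dz}{2}-\tfrac{2(q-1)(1-a)}{q}>0$ (take $\mu$ near $q\dz+\tfrac{\gamma}{4-\gamma}$ and $a$ near $1$), so the double application of Remark~\ref{rem1} cannot cover the full parameter range the proposition asserts.

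The paper avoids both problems by a different split of the work between the two sides of the inequality: it keeps $\ur^{-}$ whole in a single Young factor with exponent $q$, obtaining $\int\be{\frac{\ur}{r^{1+\alpha}}}^{q}r^{-(2-\ep_{0}q)}$ with a \emph{positive} shift $\ep_{0}$ determined by $\kappa=-\tfrac{2(q-1)}{q}(1-a)$ and $\dz$; Remark~\ref{rem1} is then applied once, with admissible window $(-2+\ep_{0},\ep_{0})$ --- which is exactly what produces the $q\dz$-shifted interval (\ref{ae}) and accommodates $\alpha>0$ --- and the remaining interpolation between $\int\otab^{q}\jrk$ and $\int\otab^{q}$ is done afterwards by a second Young inequality on the $\ot$-integral (via $b=1-q\ep_{0}/2\in(0,1)$), not by splitting $\ur$ beforehand. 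If you want to keep your three-factor scheme, you would have to let the two $\ur$-factors carry the $r$-weights dictated by this $\ep_{0}$ rather than the weights $\jrk$ and $1$, which essentially collapses your argument back into the paper's.
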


\begin{proof}
We denote $\kappa = - \frac{2(q-1)}{q}(1-a)$. Then we can write

\[
I_{3}= \int \umpr \utmb^{p}  = \int  \be{\frac{\ut}{r^{\mu+ \frac{2}{p}}}}^{\frac{p(q-1)}{q}} \cdot  |r^{1- \dz} \ut |^{\frac{p}{q}} \cdot  \frac{\ur^{-}}{r^{1+\alpha+ \frac{2}{q}- \kappa - \frac{p}{q}\dz  }}  \overset{Y( \frac{q}{q-1},q)}{\leq }
\]
\[
\leq \ep_{4} \int \utmb^{p}  \frac{1}{r^{2}} + C(q,\gamma, \varpi, \ep_{4} ) \int \be{\frac{\ur}{r^{1+ \alpha}}}^{q} \frac{1}{r^{2-\kappa q - \dz p }} .
\]

\no We define  $\ep_{0}$  by equality \hd $-\kappa q - \dz p =- q [ \frac{(4-\gamma)q}{2} \dz - \frac{2(q-1)}{q}(1-a) ] \equiv - q \ep_{0} $. Then  using (\ref{ae})  we deduce that $-2+\ep_{0}<\alpha <\ep_{0}$,     hence we can use remark~\ref{rem1} and we get

\[
I_{3} \leq \ep_{4} \int \utmb^{p}  \frac{1}{r^{2}} + C(q,\gamma, \varpi, \ep_{4},a , \mu  ) \int \otab^{q} \frac{1}{r^{2-\ep_{0} q }}.
\]

\no Using the assumption on $a$ we get that  $b= 1- \frac{q \ep_{0} }{2}$ satisfies  $b \in (0,1)$ and we can write
\[
\int \otab^{q} \frac{1}{r^{2-\ep q }} = \int \be{\frac{\ot}{r^{\alpha + \frac{2}{q}}}}^{bq} \cdot \otab^{(1-b)q} \hd \hd  \overset{Y(\frac{1}{b}, \frac{1}{1-b})}{\leq} \hd
\ep_{5} \int \otab^{q} \frac{1}{r^{2}} + C(1/\ep_{5}) \int \otab^{q}.
\]

\no Thus we get

\[
|I_{3}| \leq \ep_{4} \int \utmb^{p}  \frac{1}{r^{2}} + \ep_{5} \int \otab^{q} \frac{1}{r^{2}}+  C \int \otab^{q} ,
\]

\no where $C=C(q,\gamma, \varpi, \ep_{4},\ep_{5},a , \mu  )$.
\end{proof}

\no From propositions~\ref{rem3} and \ref{rem2} we get

\begin{corollary}
Assume that $\varpi\equiv \| r^{1-\dz} \ut \|_{L^{\infty}}\leq C$ for some $\dz \in (0, \frac{1}{3})$. Then for all \hd  $\gamma \in (0,3)$, $q\in (\frac{2}{4-\gamma}, 2)$,
$a\in (1- \frac{(4-\gamma)q^{2}}{4(q-1)} \dz,1)$,
\eqq{
\mu \in  (q \dz-1,q\dz+ \frac{\gamma}{4-\gamma})\cap (-1,1),}{ag}
\no  and for $\ep_{1}$, $\ep_{2}$, $\ep_{3}  \in (0,1)$ the following estimate holds

\eqq{|I_{1}|+ |I_{3}| \leq   \ep_{1} \int \utmb^{p} \jrk +  \ep_{2} \int \otab^{p} \jrk + \ep_{3}
\int \be{\nabla \utmb^{ \frac{p}{2} } }^{2}+C \int \otab^{p},  }{ah}

\no where $p =\frac{(4-\gamma)q}{2} $, \hd $\alpha = 2\mu - \frac{\gamma}{2} (1+ \mu )- \frac{2(q-1)}{q}(1-a), $ \hd and $C=C(\ep_{1},\ep_{2}, \ep_{3},a, q, \dz, \gamma, \varpi )$.

\label{wn1}
\end{corollary}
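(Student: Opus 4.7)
The corollary is essentially a bookkeeping combination of the two preceding propositions, so the plan is to apply each estimate, reconcile the two sets of $\ep$'s, and convert a partial derivative term into a full gradient term. The parameter ranges in the corollary are exactly the intersection of those in Propositions~\ref{rem3} and~\ref{rem2}: note that $a \in (1 - \tfrac{(4-\gamma)q^{2}}{4(q-1)}\dz,1)$ appearing here matches the lower bound on $a$ coming from the requirement $\ep_{0}>0$ in the proof of Proposition~\ref{rem2} (so the apparent discrepancy with the statement of Proposition~\ref{rem2} is a harmless reshuffling), and the $\mu$-interval is identical in both.

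The first step is to invoke Proposition~\ref{rem3} with some auxiliary tolerances $\ep_{1}',\ep_{2}',\ep_{3}'$ to bound $|I_{3}|$ by
\[
\ep_{1}'\int \utmb^{p-2}\Big|\frac{\utz}{r^{\mu}}\Big|^{2}+\ep_{2}'\int \utmb^{p}\frac{1}{r^{2}}+\ep_{3}'\int \otab^{q}\frac{1}{r^{2}}+C\int \otab^{q},
\]
and then invoke Proposition~\ref{rem2} with tolerances $\ep_{4}',\ep_{5}'$ to bound $|I_{1}|$ by $\ep_{4}'\int \utmb^{p}\frac{1}{r^{2}}+\ep_{5}'\int \otab^{q}\frac{1}{r^{2}}+C\int \otab^{q}$. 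The key observation that converts the first term of the $I_{3}$ estimate into the gradient term demanded in \eqref{ah} is that, since $\utm=\ut/r^{\mu}$ is axially symmetric, $(\ut/r^{\mu})_{,z}=\utz/r^{\mu}$ is a component of $\nabla \utm$, so
\[
\utmb^{p-2}\Big|\frac{\utz}{r^{\mu}}\Big|^{2}\leq \utmb^{p-2}|\nabla \utm|^{2}=\frac{4}{p^{2}}\be{\nabla \utmb^{\frac{p}{2}}}^{2}.
\]
Hence the $\ep_{1}'$ term is absorbed into $\ep_{3}\int\be{\nabla \utmb^{p/2}}^{2}$ by choosing $\ep_{1}'=\tfrac{p^{2}}{4}\ep_{3}$.

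The final step is to set $\ep_{2}'+\ep_{4}'=\ep_{1}$ and $\ep_{3}'+\ep_{5}'=\ep_{2}$ (for instance each half), add the two estimates, and relabel the $q$-integrals; the constant $C$ then depends on $\ep_{1},\ep_{2},\ep_{3},a,q,\dz,\gamma,\varpi$ as claimed. I expect no genuine obstacle here: the only point requiring any care is the identification of $\utz/r^{\mu}$ with a component of $\nabla \utm$, which follows from axisymmetry and the explicit chain rule, and the verification that the intersection of the parameter ranges from the two propositions is exactly the set prescribed in the corollary.
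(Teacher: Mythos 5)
Your proposal is correct and matches the paper's (essentially omitted) argument: the paper simply states that the corollary follows from Propositions~\ref{rem3} and~\ref{rem2}, and your write-up supplies exactly the needed details — the identity $\utmb^{p-2}\be{\utz/r^{\mu}}^{2}\leq \frac{4}{p^{2}}\be{\nabla \utmb^{p/2}}^{2}$ to absorb the $\ep_{1}'$ term into the gradient term, and the splitting of tolerances. Your observation that the $a$-range in the corollary is the one actually required for $b\in(0,1)$ in the proof of Proposition~\ref{rem2} (so that the range printed in Proposition~\ref{rem2} is a typo) is also accurate, as is the implicit reading of $\otab^{p}$ as $\otab^{q}$ in \eqref{ah}.
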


\begin{corollary}
Assume that $\varpi\equiv \| r^{1-\dz} \ut \|_{L^{\infty}}<\infty$ for some $\dz \in (0, \frac{1}{3})$. Then for $\ep\in (0,\frac{1}{14})$ such that
\eqq{\Big( \frac{1-2\ep}{1-\ep} \Big)\ep \leq \dz , }{ai}

\no for  all $\ep_{1},\ep_{2},\ep_{3} \in (0,1)$ the following estimate holds

\eqq{|I_{1}|+ |I_{3}| \leq   \ep_{1} \int \utmb^{p} \jrk +  \ep_{2} \int \otab^{p} \jrk + \ep_{3}
\int \be{\nabla \utmb^{ \frac{p}{2} } }^{2}+C \int \otab^{p},  }{aj}

\no where $p=2(1-\ep^{2})$, $q=2(1-\ep)$, $\mu=\frac{1-\ep}{1+\ep}$ and $\alpha = - 2(1-2\ep)(1+\ep)\ep$ and $C=C(\ep_{1},\ep_{2}, \ep_{3},\dz,\varpi, \ep)$. In particular, for such exponents we have

\eqq{\int \be{\frac{\ur}{r^{1+ \alpha}} }^{q} \leq c(q,\alpha) \int \otab^{q},  }{al}

\no and

\eqq{\frac{2}{\infty}+ \frac{3}{q}-1-\alpha \leq \frac{1}{2}+ 7\ep<1}{ak}

\label{wn2}
\end{corollary}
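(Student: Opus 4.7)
The plan is to reduce everything to a single application of Corollary~\ref{wn1}, with specific exponents, and to pick up (\ref{al}) from Remark~\ref{rem1} by making the weight there trivial. The translation between the parameters of the two corollaries is forced: I would set $\gamma = 2(1-\ep)$, $q=2(1-\ep)$, $\mu = \frac{1-\ep}{1+\ep}$, and then choose $a$ to match the prescribed $\alpha$. One checks immediately $\frac{\gamma}{4-\gamma}=\mu$ and $p=\frac{(4-\gamma)q}{2}=2(1-\ep^{2})$, while $2\mu-\frac{\gamma}{2}(1+\mu)=0$, so the formula for $\alpha$ in Corollary~\ref{wn1} collapses to $\alpha = -\frac{2(q-1)}{q}(1-a)$. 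Solving this against the target value $\alpha=-2(1-2\ep)(1+\ep)\ep$ gives $1-a = 2(1-\ep^{2})\ep$, which is in $(0,1)$ since $\ep<1/14$.

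Next I would verify the admissibility conditions of Corollary~\ref{wn1}. The range $\gamma\in(0,3)$ is obvious. The condition $q\in(\tfrac{2}{4-\gamma},2)$ becomes $\tfrac{1}{1+\ep}<2(1-\ep)<2$, which is equivalent to $\ep^{2}<\tfrac12$. For $\mu\in(q\dz-1,q\dz+\tfrac{\gamma}{4-\gamma})\cap(-1,1)$, the upper bound is automatic because $q\dz>0$ and $\tfrac{\gamma}{4-\gamma}=\mu$; the lower bound rearranges to $\dz<\tfrac{1}{1-\ep^{2}}$, which follows from $\dz<1/3$. Finally, the condition on $a$, namely $a>1-\frac{(4-\gamma)q^{2}}{4(q-1)}\dz$, simplifies after cancelling the common factor $2(1+\ep)(1-\ep)$ to the inequality $\frac{(1-2\ep)\ep}{1-\ep}\le\dz$, which is precisely the hypothesis (\ref{ai}). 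Corollary~\ref{wn1} then yields (\ref{aj}) verbatim.

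For (\ref{al}) I would apply Remark~\ref{rem1} with $\ep_{0}=\tfrac{2}{q}=\tfrac{1}{1-\ep}$, so that the weight $r^{-(2-\ep_{0}q)}=1$ and the inequality reduces to the desired unweighted one. The hypothesis of Remark~\ref{rem1} asks $-2+\ep_{0}<\alpha<\ep_{0}$: the right-hand inequality is free since $\alpha<0$, and the left-hand one reduces to $1>2(1-\ep^{2})\ep$, which holds for $\ep<1/14$. Finally, (\ref{ak}) is a direct algebraic check: using $q=2(1-\ep)$ and $\alpha=-2(1-2\ep)(1+\ep)\ep$ one computes $\frac{3}{q}-1-\alpha = \frac{1+2\ep}{2(1-\ep)}+2\ep(1-2\ep)(1+\ep)$, which expands to $\tfrac12+\tfrac{7\ep}{2}+O(\ep^{2})$ and is therefore bounded above by $\tfrac12+7\ep$, strictly less than $1$ for $\ep<1/14$.

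The work is essentially bookkeeping: no new analytic estimate is required beyond the two already proved. The only mild obstacle is the boundary case of (\ref{ai}): Corollary~\ref{wn1} nominally asks for the strict inequality $a>1-\frac{(4-\gamma)q^{2}}{4(q-1)}\dz$, so if (\ref{ai}) is used with equality I would invoke the estimate at a slightly smaller $\tilde{\ep}<\ep$ and note that all constants depend continuously on $\ep$; otherwise strict inequality follows automatically. Everything else is plug-and-verify.
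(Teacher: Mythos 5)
Your proposal is correct and follows essentially the same route as the paper: apply Corollary~\ref{wn1} with $\gamma=2(1-\ep)$ and $a=1-2(1-\ep^{2})\ep$ (you derive this $a$ by solving for the target $\alpha$, the paper simply states it), then obtain (\ref{al}) from the $\mathcal{A}_{q}$-weight condition $\tfrac{2}{q}-2<\alpha<\tfrac{2}{q}$ and (\ref{ak}) by direct computation. Your extra care about the boundary case of (\ref{ai}) is a point the paper glosses over, but otherwise the arguments coincide.
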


\begin{proof}
We have to verify the assumptions of corollary~\ref{wn1}. Therefore we put $\gamma= 2(1-\ep)$. Then $\gamma\in (0,3)$ and $q \in (\frac{2}{4- \gamma},2)$ and we set
 $a=1-2(1- \ep^{2})\ep$. Then using (\ref{ai})  we get $a\in (1- \frac{(4-\gamma)q^{2}}{4(q-1)} \dz,1)$. Finally,  $\mu=\frac{1-\ep}{1+\ep} $ satisfies (\ref{ag}), because $\dz $
 is positive. Then we get (\ref{ah}) with $p=\frac{(4-\gamma)q}{2}= 2(1- \ep^{2})$ and $\alpha = 2\mu - \frac{\gamma}{2} (1+ \mu )- \frac{2(q-1)}{q}(1-a) = - 2(1-2\ep)(1+\ep)\ep$.

 In order to get (\ref{al}) we have to verify that $r^{-q\alpha} $ is $\mathcal{A}_{q}$ weight. Indeed, in our case we have $\frac{2}{q}-2<\alpha<\frac{2}{q}$.  Inequality (\ref{ak}) we obtain by direct calculations.

\end{proof}

\section{Estimate of $I_{2}$}

\begin{prop}
Assume that for some positive $\delta_{1}$ holds  $t \mapsto f(t)\equiv [\int_{\rr^{3} \cap \{ r<\delta_{1}\} } |r^{d}u_{r}^{+}|^{s}dx]^{\frac{w}{s}}$ is integrable for some $s \in (\frac{3}{2}, \infty)$, $w\in (1, \infty)$ and $d \in (-1,1)$ such that
$\frac{2}{w}+ \frac{3}{s}+ d =1$. Then for  $q\in (1,\infty)$ and $\alpha \in (-1,1)$ and  for all $\ep_{1}, \ep_{2}\in (0,1)$ the following estimate holds

\eqq{|I_{2}|\leq \ep_{1} \int \otab^{q} \frac{1}{r^{2}}+ \ep_{2}\int \be{\nabla \oab{\frac{q}{2} } }^{2} + C [f(t)+g(t) ] \int \otab^{q} ,  }{ba}

\no where $C=C(\ep_{1}, \ep_{2}, \delta_{1},s,w,q)$ and $g(t)=\int | \urp|^{\frac{10}{3}}$.
\label{prop2}
\end{prop}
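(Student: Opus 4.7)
The plan is to split the integration domain of $|I_{2}|=(1-\alpha)\int \uppr \oab{q}$ into the near-axis region $\{r<\delta_{1}\}$, where the weighted Serrin hypothesis on $\urp$ is available, and the far-axis region $\{r\geq\delta_{1}\}$, where the factor $\jr$ can be bounded by $\tfrac{1}{\delta_{1}}$ and one may rely on the embedding $\bu\in L^{10/3}_{t,x}(\rr^{3})$ furnished by the energy inequality. Writing $f=\oab{q/2}$, so that $\oab{q}=f^{2}$, $\int\oab{q}\jrk=\int f^{2}\jrk$, and $\int|\nabla\oab{q/2}|^{2}=\int|\nabla f|^{2}$, the target inequality reads
\[
|I_{2}|\leq \ep_{1}\int f^{2}\jrk + \ep_{2}\int|\nabla f|^{2} + C[f(t)+g(t)]\int f^{2}.
\]

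On $\{r\geq\delta_{1}\}$ I would apply H\"older with exponents $(\tfrac{10}{3},\tfrac{10}{7})$ to bound $\int_{r\geq\delta_{1}}\uppr f^{2}$ by $\tfrac{1}{\delta_{1}}\|\urp\|_{L^{10/3}}\|f\|_{L^{20/7}}^{2}$. The three-dimensional Gagliardo--Nirenberg inequality gives $\|f\|_{L^{20/7}}^{2}\leq C\|\nabla f\|_{L^{2}}^{9/10}\|f\|_{L^{2}}^{11/10}$ (the exponents forced by scaling), and Young's inequality with conjugate pair $(\tfrac{20}{9},\tfrac{20}{11})$ yields $\ep\|\nabla f\|_{L^{2}}^{2}+C_{\ep}\|\urp\|_{L^{10/3}}^{20/11}\|f\|_{L^{2}}^{2}$. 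Since $\tfrac{20}{11}<\tfrac{10}{3}$, the pointwise bound $\|\urp\|_{L^{10/3}}^{20/11}\leq 1+\|\urp\|_{L^{10/3}}^{10/3}=1+g(t)$ closes this part, the extra constant being controlled by $\int f^{2}$ and absorbed into $C$.

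On $\{r<\delta_{1}\}$ I would rewrite $\uppr=\frac{r^{d}\urp}{r^{1+d}}$ and apply H\"older with exponents $(s,s')$, $s'=\frac{s}{s-1}$, obtaining the bound $\|r^{d}\urp\|_{L^{s}(r<\delta_{1})}\cdot\|f^{2}/r^{1+d}\|_{L^{s'}}$. Since $\|f^{2}/r^{1+d}\|_{L^{s'}}=\|f/r^{(1+d)/2}\|_{L^{2s'}}^{2}$, the key ingredient is a weighted Caffarelli--Kohn--Nirenberg inequality of the form
\[
\|f/r^{(1+d)/2}\|_{L^{2s'}}^{2} \leq C\|\nabla f\|_{L^{2}}^{2(1-1/w)}\|f\|_{L^{2}}^{2/w},
\]
where the interpolation exponent $a=1-\tfrac{1}{w}$ is determined by $a=\tfrac{1+d}{2}-\tfrac{3}{2s'}+\tfrac{3}{2}$ combined with the Serrin scaling $\tfrac{2}{w}+\tfrac{3}{s}+d=1$. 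Young's inequality with conjugate pair $\bigl(\frac{w}{w-1},w\bigr)$ then produces
\[
\ep\|\nabla f\|_{L^{2}}^{2}+C_{\ep}\|r^{d}\urp\|_{L^{s}(r<\delta_{1})}^{w}\|f\|_{L^{2}}^{2} = \ep\|\nabla f\|_{L^{2}}^{2}+C_{\ep}f(t)\|f\|_{L^{2}}^{2}.
\]

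The main obstacle is justifying the weighted interpolation step: one needs a Caffarelli--Kohn--Nirenberg-type inequality in $\rr^{3}$ with \emph{cylindrical} (rather than spherical) weight $r^{-(1+d)/2}$, applicable to axisymmetric functions, over the full range $d\in(-1,1)$ and $s\in(\tfrac{3}{2},\infty)$. The assumption $d>-1$ is exactly what keeps $\sigma=\tfrac{1+d}{2}>0$, so that the CKN admissibility conditions near the axis are satisfied; the upper bound $d<1$ ensures that $\tfrac{1}{r^{1+d}}$ is not too singular to be compensated by the weight on $\urp$. Once this weighted inequality is in hand, the rest is a routine H\"older--Sobolev--Young argument, and the term $\ep_{1}\int f^{2}\jrk$ in the statement is not actually needed for this approach but is harmlessly allowed in the bound.
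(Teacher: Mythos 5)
Your decomposition into near-axis and far-axis regions, and your treatment of the far region, mirror the paper's proof (the paper uses a smooth cutoff $\eta$ and runs the same H\"older--Young chain with $a=4$, $b=5$ instead of Gagliardo--Nirenberg, but both routes land on $g(t)=\int|\urp|^{10/3}$ plus a Sobolev term). The problem is the near-axis region: your entire argument there rests on the weighted interpolation inequality
\[
\big\| f\, r^{-(1+d)/2}\big\|_{L^{2s'}}^{2}\ \leq\ C\,\|\nabla f\|_{L^{2}}^{2(1-1/w)}\,\|f\|_{L^{2}}^{2/w},
\]
which you correctly identify as the main obstacle and then leave unproved. This is not a quotable classical fact: Caffarelli--Kohn--Nirenberg concerns the spherical weight $|x|^{-\sigma}$, whereas here the weight is a power of the distance to a \emph{line} in $\rr^{3}$, i.e.\ a codimension-two singular set, which is exactly the borderline situation for such inequalities --- recall that the cylindrical Hardy inequality $\int f^{2}r^{-2}\leq C\int|\nabla f|^{2}$ is \emph{false} in $\rr^{3}$, which is precisely why the paper keeps $\int \otab^{q}\jrk$ as a separate good term in (\ref{i}) rather than deriving it from the gradient. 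A cylindrical analogue of CKN does hold in the range you need (Maz'ya-type inequalities; the admissibility conditions $\sigma=(1+d)/2<1$ and $2s'\sigma<2$ do follow from $\frac{2}{w}+\frac{3}{s}+d=1$ with $w>1$), but it must be stated precisely, its hypotheses checked, and a proof or reference supplied; as written, this step is a genuine hole.

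The paper avoids the issue entirely, and this is where your closing remark that the term $\ep_{1}\int\otab^{q}\jrk$ ``is not actually needed'' points in the wrong direction: that term is the device that makes the proof elementary. Setting $a=\frac{2}{2-(\frac{2}{w}+\frac{3}{s})}=\frac{2}{1+d}>1$, the paper factors $\frac{1}{r}=r^{-2/a}\cdot r^{(2-a)/a}$ and applies Young with exponents $(a,\frac{a}{a-1})$, paying $\ep_{1}\int\otab^{q}r^{-2}$ and transferring the entire remaining weight onto $\urp$; two applications of H\"older and one more Young (with $b=\frac{2s}{w}+3$) then isolate exactly $[\int|r^{d}\urp|^{s}]^{w/s}=f(t)$ times $\int\otab^{q}$, and the only functional inequality required is the \emph{unweighted} Sobolev embedding $[\int\otab^{3q}]^{1/3}\leq C\int|\nabla\oab{q/2}|^{2}$. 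Your approach discards the Hardy-type term and must pay for that with the nontrivial weighted Sobolev inequality you have not established; either supply that lemma with proof, or switch to the paper's weight-absorption argument.
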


\no Weak solutions belong to $L^{\frac{10}{3}}$, thus function $g(t)$ is integrable.

\begin{proof}
Let $\eta=\eta(r)$ be smooth cut off function such that $\eta(r)=1$ for $r< \delta_{1}/2$ and $\eta(r)=0$ for $r> \delta_{1}$. The we have
\[
I_{2}= \int \frac{\eta \ur^{+}}{r} \oab{q}+ \int \frac{(1- \eta)\ur^{+}}{r} \oab{q}\equiv I_{2,0}+ I_{2,1}.
\]

\no We begin with the first integral. We denote $a=\frac{2}{2- (\frac{2}{w}+ \frac{3}{s})} $, \hd $b= \frac{2s}{w}+3  $. Then $a>1$ and $b>3$ and we can write
\[
I_{2,0}\equiv \int \frac{\eta \ur^{+}}{r} \oab{q} = \int  \otab^{\frac{q}{a}} \frac{1}{r^{\frac{2}{a}}} \cdot \eta \urp r^{\frac{2-a}{a}} \otab^{q\frac{a-1}{a}}
\]
\[
  \overset{Y(a, \frac{a}{a-1})}{\leq } \ep_{1} \int \otab^{q} \frac{1}{r^{2}}+ c(\ep_{1},a) \int {|\eta \urp|}^{\frac{a}{a-1}} r^{\frac{2-a}{a-1}} \cdot \otab^{q} .
\]

\no Now we estimate the last integral on the right hand side

\[
\int {|\eta \urp|}^{\frac{a}{a-1}} r^{\frac{2-a}{a-1}} \cdot \otab^{q} \overset{H(\frac{b}{2}, \frac{b}{b-2})}{\leq} \Big[\int |\eta \urp|^{\frac{ab}{2(a-1)}} r^{\frac{b(2-a)}{2(a-1)}}\Big]^{\frac{2}{b}} \cdot \Big[\int \otab^{\frac{qb}{b-2}}\Big]^{\frac{b-2}{b}}
\]
\[=
\Big[\int |\eta \urp|^{\frac{ab}{2(a-1)}} r^{\frac{b(2-a)}{2(a-1)}}\Big]^{\frac{2}{b}} \cdot \Big[\int \otab^{q\frac{b-3}{b-2}} \cdot \otab^{\frac{3q}{b-2}}\Big]^{\frac{b-2}{b}}
\]
\[
\overset{H(\frac{b-2}{b-3},b-2)}{\leq}  \Big[\int |\eta \urp|^{\frac{ab}{2(a-1)}} r^{\frac{b(2-a)}{2(a-1)}}\Big]^{\frac{2}{b}} \cdot \Big[ \int \otab^{q}\Big]^{\frac{b-3}{b}} \cdot
\Big[ \int \otab^{3q}\Big]^{\frac{1}{b}}
\]
\[
\overset{Y(\frac{b}{3}, \frac{b}{b-3})}{\leq } \ep_{2} \Big[ \int \otab^{3q}\Big]^{\frac{1}{3}}+ c(\ep_{2},b) \Big[\int |\eta \urp|^{\frac{ab}{2(a-1)}} r^{\frac{b(2-a)}{2(a-1)}}\Big]^{\frac{2}{b-3}} \cdot \Big[ \int \otab^{q}\Big].
\]
\no By definition we have $\frac{ab}{2(a-1)} =s$, $\frac{b(2-a)}{2(a-1)}=ds$ and $\frac{2}{b-3}= \frac{w}{s}$, thus

\eqq{I_{2,0} \leq \ep_{1} \int \otab^{q} \frac{1}{r^{2}}+\ep_{2} \Big[ \int \otab^{3q}\Big]^{\frac{1}{3}}+ c(\ep_{1},\ep_{2},w,s)  f(t) \cdot \Big[ \int \otab^{q}\Big].}{qqa}

\no In order to estimate $I_{2,1}$ we put $a=4$ and $b=5$ and then proceeding analogously we get

\eqq{I_{2,1} \leq \ep_{1} \int \otab^{q} \frac{1}{r^{2}}+\ep_{2} \Big[ \int \otab^{3q}\Big]^{\frac{1}{3}}+ c(\ep_{1},\ep_{2})  \Big[\int |(1-\eta) \urp|^{\frac{10}{3}} r^{-\frac{5}{3} }\Big] \cdot \Big[ \int \otab^{q}\Big].}{qqb}

\no Clearly $\int |(1-\eta) \urp|^{\frac{10}{3}} r^{-\frac{5}{3} }  \leq (2/\delta_{1})^{5/3}\int | \urp|^{\frac{10}{3}} $ and the last function in integrable\footnote{Weak solutions belong to $L^{\frac{10}{3}}$. } on $(0,T)$.
Finally, applying Sobolev imbedding theorem in estimates (\ref{qqa}) and (\ref{qqb}) we get (\ref{ba}).

\end{proof}

\begin{corollary}
Assume that $\varpi\equiv \| r^{1-\dz} \ut \|_{L^{\infty}}\leq C$ for some $\dz \in (0, \frac{1}{3})$ and
\eqq{
t \mapsto f(t)\equiv \Big[\int\limits_{\rr^{3}\cap \{r<\delta_{1} \}} |r^{d}u_{r}^{+}|^{s}dx\Big]^{\frac{w}{s}}  \hd \mbox{ is integrable on } \hd (0,T)
 }{maina}
 \no for some $s \in (\frac{3}{2}, \infty)$, $w\in (1, \infty)$ and $d \in (-1,1)$ such that \mbox{$\frac{2}{w}+ \frac{3}{s}+ d =1$} and  $\delta_{1}$ positive. Then  for $\ep\in (0,\frac{1}{14})$ such that
\eqq{\Big( \frac{1-2\ep}{1-\ep} \Big)\ep \leq \dz , }{bc}

\no the following estimate holds

\[
 \frac{d}{dt} \n{\utm}^{p}_{p}+ \frac{d}{dt}
\n{\ota}^{q}_{q}
+ \frac{4(p-1)\nu}{p} \int \be{\nabla \utmb^{
 \frac{p}{2} } }^{2}+\frac{4\nu(q-1)}{q}
\int \be{\nabla \oab{\frac{q}{2} } }^{2}
\]

\eqq{
+\nu(1-\mu^{2}) \int \utmb^{p} \frac{1}{r^{2}}+\nu(1-\alpha^{2}) \int \oab{q}\frac{1}{r^{2}}
\leq C[1+f(t)+g(t)] \n{\ota}^{q}_{q},}{bf}

\no where  $p=2(1-\ep^{2})$, $q=2(1-\ep)$, $\mu=\frac{1-\ep}{1+\ep}$ and $\alpha = - 2(1-2\ep)(1+\ep)\ep$ and $C=C(\nu, \ep, \dz,\delta_{1}, \varpi,s,w)$ and $g(t)=\int | \urp|^{\frac{10}{3}}$. In particular,

\eqq{\esssup_{t\in (0,T)}{\n{\ota}_{q}} \leq C'\Big[ \n{\utm(0)}_{p}+ \n{\ota(0)}_{q} \Big], }{bd}

\no where $C'=C'(C,\|f\|_{L^{1}(0,T)})$ and $\mathbf{u}$ is regular on $(0,T)$.

\label{reg1}
\end{corollary}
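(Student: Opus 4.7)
The plan is to combine the master identity \eqref{main} with the estimate for $|I_{1}|+|I_{3}|$ from Corollary~\ref{wn2} and the estimate for $|I_{2}|$ from Proposition~\ref{prop2}, absorb every error term into the dissipative and damping parts on the LHS, and then close by Gronwall together with a weighted Serrin criterion. Fix $\ep\in(0,1/14)$ satisfying $\bigl((1-2\ep)/(1-\ep)\bigr)\ep\leq\dz$ and set $p=2(1-\ep^{2})$, $q=2(1-\ep)$, $\mu=(1-\ep)/(1+\ep)$, $\alpha=-2(1-2\ep)(1+\ep)\ep$. With these exponents Corollary~\ref{wn2} applies, and since $q\in(1,\infty)$ and $|\alpha|<1$ for such $\ep$, Proposition~\ref{prop2} does too.

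First, drop the nonnegative contributions $(1+\mu)\int\uppr\utmb^{p}$ and $(1-\alpha)\int\umpr\oab{q}$ from the LHS of \eqref{main}, which is permitted because $1+\mu>0$ and $1-\alpha>0$. Next, insert bound \eqref{aj} for $(1+\mu)|I_{1}|+2|I_{3}|$ and bound \eqref{ba} for $(1-\alpha)|I_{2}|$. The auxiliary terms produced are of the types $\ep_{i}\int\utmb^{p}\jrk$, $\ep_{i}\int\oab{q}\jrk$, $\ep_{i}\int\be{\nabla\utmb^{p/2}}^{2}$ and $\ep_{i}\int\be{\nabla\oab{q/2}}^{2}$; each is absorbed into the matching LHS term of \eqref{main} by choosing the corresponding $\ep_{i}$ small relative to the viscous factors $\nu(1-\mu^{2})$, $\nu(1-\alpha^{2})$, $4(p-1)\nu/p^{2}$, $4(q-1)\nu/q^{2}$. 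Only $C[1+f(t)+g(t)]\n{\ota}^{q}_{q}$ remains on the right, which is exactly \eqref{bf}; note that $g\in L^{1}(0,T)$ follows from the Leray--Hopf inclusion $\mathbf{u}\in L^{10/3}((0,T)\times\rr^{3})$.

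To derive \eqref{bd}, discard the nonnegative diffusion and damping terms from \eqref{bf} and use the trivial estimate $\n{\ota}^{q}_{q}\leq\n{\utm}^{p}_{p}+\n{\ota}^{q}_{q}$ on the right, obtaining
\[
\dt\bigl[\n{\utm}^{p}_{p}+\n{\ota}^{q}_{q}\bigr]\leq C[1+f(t)+g(t)]\bigl[\n{\utm}^{p}_{p}+\n{\ota}^{q}_{q}\bigr].
\]
Gronwall, combined with the $L^{1}(0,T)$ integrability of $f$ (by hypothesis) and of $g$, yields \eqref{bd}. For the concluding regularity statement, use \eqref{al} to upgrade the bound on $\ota$ to $u_{r}/r^{1+\alpha}\in L^{\infty}(0,T;L^{q}(\rr^{3}))$; by \eqref{ak} this is a strictly subcritical weighted Serrin-type condition on the radial component of velocity, so a standard weighted Serrin regularity criterion for axisymmetric Navier--Stokes solutions rules out the blow-up at $t^{\ast}$ assumed earlier and gives strong regularity of $\mathbf{u}$ on $(0,T)$.

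The main obstacle is not analytic but organizational: keeping track of the compatibility of the exponents $p,q,\mu,\alpha$ across \eqref{main}, \eqref{aj} and \eqref{ba}, and verifying that the absorption constants remain strictly positive throughout the range $\ep\in(0,1/14)$. The final passage from the integral bound on $\omega_{\te}/r^{\alpha}$ to full regularity is not carried out inside the corollary itself but delegated to the weighted Serrin criterion used as an external input.
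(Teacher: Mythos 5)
Your proposal is correct and follows essentially the same route as the paper: combine identity (\ref{main}) with Corollary~\ref{wn2} and Proposition~\ref{prop2}, absorb the $\ep_{i}$-terms into the viscous and damping terms on the left to get (\ref{bf}), apply Gronwall using the integrability of $f$ and $g$ to get (\ref{bd}), and conclude regularity from (\ref{al}) and (\ref{ak}) via the weighted Serrin criterion (the paper invokes Theorem~1 of \cite{KPZ} for this last step). Your write-up merely spells out the sign considerations and absorption bookkeeping that the paper leaves implicit.
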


\begin{proof}
Under our assumption we can use corollary~\ref{wn2} and proposition~\ref{prop2}  and we get (\ref{bf}) and by Gronwall lemma we obtain (\ref{bd}). Then using inequality  (\ref{al})
 we get $\esssup_{t\in (0,T)}{\n{\frac{\ur}{r^{1+\alpha}}}_{q}}\leq \mbox{const.}$ and (\ref{ak}) holds. Therefore we can apply  theorem~1 \cite{KPZ} and we deduce the regularity of $\mathbf{u}$.

\end{proof}

\begin{remark}
The condition (\ref{maina}) can be weakened a bit. Namely it is enough  to assume that
\eqq{
t \mapsto \widetilde{f}(t)\equiv \frac{\Big[\int |r^{d}u_{r}^{+}|^{s}dx\Big]^{\frac{w}{s}} }{1+\ln^{+}{\Big( \n{\utm}^{p}_{p}+
\n{\ota}^{q}_{q} \Big) } } \hd \mbox{ is integrable on } \hd (0,T),
 }{mainaa}

\no  where $d,w,s,p,q$ are as above. Indeed,  from (\ref{bf}) we have

\[
 \frac{d}{dt} \Big( \n{\utm}^{p}_{p}+ \n{\ota}^{q}_{q} \Big) \leq C[1+f(t)] \Big( \n{\utm}^{p}_{p}+ \n{\ota}^{q}_{q} \Big),
\]

\no so arguing similarly as in \cite{MS}  we can write

\[
\frac{d}{dt} \ln\Big[ 1+ \ln^{+}\big(\n{\utm}^{p}_{p}+ \n{\ota}^{q}_{q}\big) \Big] \leq C\frac{[1+f(t)] }{1+ \ln^{+}\Big( \n{\utm}^{p}_{p}+ \n{\ota}^{q}_{q} \Big) }.
\]

\no After integrating with respect time we obtain the bound for $\esssup_{t\in (0,T)}{\n{\ota}_{q}}$.
\end{remark}


\begin{thebibliography}{99}
\addcontentsline{toc}{chapter}{Bibliografia}



\bibitem{kremlpok} O. Kreml, M. Pokorn\'y: \textit{A regularity criterion for the angular velocity
component in axisymmetric Navier-Stokes equations},  Electron. J.
Differential Equations  2007, No. 08, 10 pp.

\bibitem{KPZ} A. Kubica, M. Pokorný, W. Zajączkowski, \textit{Remarks on regularity criteria for axially symmetric weak solutions to the Navier-Stokes equations},  Math. Methods Appl. Sci. 35 (2012), no. 3, 360–371.

\bibitem{LeMaNePo} S. Leonardi, J. M\'alek, J. Ne\v cas, M. Pokorn\' y:
{\it On axially symmetric flows in $\mathbb{R}^{3}$},
ZAA 18 (1999), 639--649.

\bibitem{MS} S. Montgomery-Smith, \textit{Conditions implying regularity of the three dimensional Navier-Stokes equation}, Appl. Math. 50 (2005), no. 5, 451–464.

\bibitem{NeNoPe} J. Neustupa, A. Novotn\'y, P. Penel: {\it An interior regularity of a weak solution to the Navier–Stokes equations
in dependence on one component of velocity}, Topics in mathematical fluid mechanics Quad. Mat. 10 (2002), 163--183.


\bibitem{pokornyneustupa} J. Neustupa, M. Pokorn\'y, \textit{An
interior regularity criterion for an axially symmetric suitable weak
solution to the Navier-Stokes equations}, J. Math. Fluid Mech. 2
(2000), no. 4, 381-399.

\bibitem{NePo2} J. Neustupa, M. Pokorn\'y:
 {\it Axisymmetric flow of Navier-Stokes fluid in the whole space
with non-zero angular velocity component}, Math. Boh. 126, No. 2 (2001), 469--481.

\bibitem{PePo} P. Penel, M. Pokorn\'y: {\it Some new regularity criteria for the Navier–Stokes equations containing the
gradient of velocity}, Appl. Math. 49 (2004), 483--493.

\bibitem{pokorny} M. Pokorn\'y, \textit{A regularity criterion for the angular velocity component
in the case of axisymmetric Navier-Stokes equations},  Elliptic and
parabolic problems (Rolduc/Gaeta, 2001),  233--242, World Sci.
Publ., River Edge, NJ, 2002.

\bibitem{Po1} M. Pokorn\'y: {\it On the result of He concerning the smoothness of solutions to the Navier–Stokes equations},
Electron. J. Diff. Eqns 11 (2003), 1--8.

\bibitem{Po2} M. Pokorn\'y: {\it A short note on regularity criteria for the Navier–Stokes equations containing the velocity
gradient}, Regularity and other Aspects of the Navier–Stokes Equations vol 70 (Warsaw: Banach Center
Publications) pp 199--207, 2005.


\bibitem{UcYu} M.R.  Uchovskii, B.I. Yudovich:
{\it Axially symmetric flows of an ideal and viscous fluid in
the whole space} (in Russian, also J. Appl. Math. Mech, 32
(1968), 52-61), Prikladnaya matematika i mechanika 32 (1968), 59--69.







\end{thebibliography}
\end{document}